\newtheorem*{corollary*}{Corollary}
\newtheorem*{example*}{Example}
\newtheorem*{theorem*}{Theorem}
\newtheorem*{proposition*}{Proposition}
\newtheorem{theorem}{Theorem}[section]
\newtheorem{lemma}[theorem]{Lemma}
\newtheorem{proposition}[theorem]{Proposition}
\newtheorem*{claim*}{Claim}
\theoremstyle{definition}
\theoremstyle{remark}
\numberwithin{equation}{theorem}
\renewcommand*\env@matrix[1][\
arraystretch]{%
  \edef\arraystretch{#1}%
  \hskip -\arraycolsep
  \let\@ifnextchar\new@ifnextchar
  \array{*\c@MaxMatrixCols c}}
\begin{document}

\title{On a new formula for the Gorenstein dimension}
\date{\today}

\subjclass[2010]{Primary 16G10, 16E10}

\keywords{Gorenstein algebras, enveloping algebra, Gorenstein homological algebra, Gorenstein projective dimension}

\author{Ren\'{e} Marczinzik}
\address{Institute of algebra and number theory, University of Stuttgart, Pfaffenwaldring 57, 70569 Stuttgart, Germany}
\email{marczire@mathematik.uni-stuttgart.de}

\begin{abstract}
Let $A$ be a finite dimensional algebra over a field $K$ with enveloping algebra $A^e=A^{op} \otimes_K A$.
We call algebras $A$ that have the property that the subcategory of Gorenstein projective modules in $mod-A$ coincide with the subcategory $\{ X \in mod-A | Ext_A^i(X,A)=0 $ for all $i \geq 1 \}$ left nearly Gorenstein. The class of left nearly Gorenstein algebras is a large class that includes for example all Gorenstein algebras and all representation-finite algebras. We prove that the Gorenstein dimension of $A$ coincides with the Gorenstein projective dimension of the regular module as $A^e$-module for left nearly Gorenstein algebras $A$.
We give three application of this result. The first generalises a formula by Happel for the global dimension of algebras. The second applications generalises a criterion of Shen for an algebra to be selfinjective. As a final application we prove a stronger version of the first Tachikawa conjecture for left nearly Gorenstein algebras.

\end{abstract}

\maketitle
\section*{Introduction}
Let $A$ always be a finite dimensional connected algebra over a field $K$ and modules are finite dimensional right modules if nothing is stated otherwise. $A^e:=A^{op} \otimes_K A$ denotes the enveloping algebra of $A$. It is well known that the $A^e$-modules correspond to the $A$-bimodules.
In \cite{Ha}, Happel obtained the formula $pd_{A^e}(A)=gldim(A)$ for finite dimensional algebras over an algebraically closed field as a corollary of a result about the minimal projective resolution of the regular module as a bimodule.
This can be used for example to show that the Hochschild cohomology of algebras of finite global dimension $n$ vanishes for degrees greater than $n$ over an algebraically closed field.
However, Happel's formula is not correct over general fields as the next example shows, where $F_p$ denotes the finite field with $p$ elements:
\begin{example*} \label{fexample}
Let $k:=F_p(X)$ and $K:=k[T]/(T^p-X)$. Then $K$ is a finite field extension of $k$ and one has $K \otimes_k K \cong K[T]/(T^p)$. Thus the projective dimension of $K$ over $K \otimes_k K$ is infinite, while the global dimension of $K$ is zero, since it is a field. 
\end{example*}

Thus it is desirable to obtain a formula valid for algebras over any fields.
We achieve this goal using Gorenstein homological algebra.
One goal of Gorenstein homological algebra is to extent classical results from homological algebra by a Gorenstein homological version, see for example the meta-question in the introduction of \cite{CFH}.
The Gorenstein homological version of finite global dimension $gldim(A)$ is finite Gorenstein dimension $Gdim(A)$ and the finite projective dimension of a module $M$  $pd_A(M)$ corresponds to finite Gorenstein projective dimension $Gpd_A(M)$.
Following \cite{Mar}, we call an algebra left nearly Gorenstein in case the full subcategory of Gorenstein projective modules coincides with the full subcategory of modules $M$ with $Ext^i(M,A)=0$ for all $i \geq 1$.
The class of left nearly Gorenstein algebras is very large. It contains for example the class of Gorenstein algebras and all representation-finite algebras.
In fact, only around 40 years after the definition of Gorenstein projective modules by Auslander and Bridger in \cite{AB} an algebra that is not left nearly Gorenstein was found in \cite{JS}. For a systematic construction of algebras that are not left nearly Gorenstein we refer to \cite{Mar2}.
Our main result is as follows:

\begin{theorem*}
Let $A$ be a left nearly Gorenstein algebra.
Then $Gdim(A)=Gpd_{A^e}(A)$.
\end{theorem*}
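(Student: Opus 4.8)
The plan is to show that both $Gdim(A)$ and $Gpd_{A^e}(A)$ compute the same one-sided invariant, namely the injective dimension of $A$, with the left nearly Gorenstein hypothesis entering only to control the first of these. So I would begin by establishing the reduction $Gdim(A)=\operatorname{injdim}(A_A)$. Recall that $Gdim(A)=\sup\{Gpd_A(M)\mid M\in\mod A\}$ and that $Gpd_A(M)\le n$ exactly when the syzygy $\Omega^n M$ is Gorenstein projective. Since $A$ is left nearly Gorenstein, $\Omega^n M$ is Gorenstein projective precisely when $\Ext_A^i(\Omega^n M,A)=0$ for all $i\ge 1$, and dimension shifting rewrites this as $\Ext_A^i(M,A)=0$ for all $i>n$. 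Taking the supremum over all $M$ gives $Gdim(A)\le n$ if and only if $\operatorname{injdim}(A_A)\le n$, hence $Gdim(A)=\operatorname{injdim}(A_A)$; when this is finite, $A$ is Iwanaga--Gorenstein and the value agrees with $\operatorname{injdim}({}_AA)$. This is the unique step using the hypothesis, and it is exactly where membership in ${}^{\perp}A$ is upgraded to Gorenstein projectivity.

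For the inequality $Gdim(A)\le Gpd_{A^e}(A)$ I would imitate Happel's argument. Writing $n=Gpd_{A^e}(A)$, choose a bimodule resolution $0\to G\to P_{n-1}\to\cdots\to P_0\to A\to 0$ in which $P_0,\dots,P_{n-1}$ are projective over $A^e$ and $G=\Omega^n_{A^e}(A)$ is Gorenstein projective. Every projective $A^e$-module is flat as a left $A$-module, so for any $M\in\mod A$ a dimension shift through the $P_i$ yields $\operatorname{Tor}^A_{j}(M,G)\cong\operatorname{Tor}^A_{j+n}(M,A)=0$ for $j\ge 1$. Consequently $M\otimes_A-$ preserves exactness and produces $0\to M\otimes_A G\to M\otimes_A P_{n-1}\to\cdots\to M\otimes_A P_0\to M\to 0$ with projective right $A$-modules in positions $0,\dots,n-1$, exhibiting $M\otimes_A G$ as the $n$-th syzygy of $M$. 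It then remains only to see that $M\otimes_A G$ is Gorenstein projective over $A$, which would give $Gpd_A(M)\le n$ for all $M$ and hence the inequality.

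This last transfer of Gorenstein projectivity from $A^e$ to $A$ is the step I expect to be the main obstacle. The difficulty is that Gorenstein projectivity is governed by $\Ext$ rather than $\operatorname{Tor}$ and is far from flatness, so applying $M\otimes_A-$ to a complete resolution of $G$ need not keep it totally acyclic even though the $\operatorname{Tor}$-vanishing above is automatic. I would attack it through a restriction lemma: a Gorenstein projective $A^e$-module is Gorenstein projective as a one-sided $A$-module, since projective bimodules are projective on each side and, via the Cartan--Eilenberg identification $\Ext^i_{A^e}(A,\Hom_K(M,N))\cong\Ext^i_A(M,N)$, a $\Hom_{A^e}(-,A^e)$-exact complex of bimodules restricts to a $\Hom_A(-,A)$-exact complex of one-sided modules; combining this with the $\operatorname{Tor}$-vanishing should show that the complete resolution of $G$ survives $M\otimes_A-$ as a totally acyclic complex.

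For the reverse inequality $Gpd_{A^e}(A)\le Gdim(A)$, which is only needed when $A$ is Gorenstein of dimension $m$, I would specialise the Cartan--Eilenberg isomorphism to the free bimodule. Writing $D=\Hom_K(-,K)$ and identifying ${}_AA\otimes_K A_A$ as $\Hom_K(D({}_AA),A)$, one obtains $\Ext^i_{A^e}(A,A^e)\cong\Ext^i_A(D({}_AA),A)$, where $D({}_AA)$ is the injective cogenerator of $\mod A$. Since $\operatorname{injdim}(A_A)=m$ forces $\Ext^{>m}_A(D({}_AA),A)=0$, and the symmetric computation over $A^{op}$ gives the left-handed counterpart from $\operatorname{injdim}({}_AA)=m$, the module $\Omega^m_{A^e}(A)$ should be totally reflexive, hence Gorenstein projective, yielding $Gpd_{A^e}(A)\le m$. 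The same $\Ext$-identification, together with the formula $Gpd_{A^e}(A)=\sup\{i\mid\Ext^i_{A^e}(A,A^e)\neq 0\}$ valid once finiteness is known, also gives the matching lower bound and closes the argument.
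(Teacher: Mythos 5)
Your step 1 is fine: for a left nearly Gorenstein algebra the syzygy characterisation of Gorenstein projective dimension does give $Gdim(A)=\operatorname{injdim}(A_A)$, and your $\Ext$-identification $\Ext^i_{A^e}(A,A^e)\cong \Ext^i_A(D(A),A)$ is exactly the paper's Lemma \ref{lemmata}. The two steps that actually connect $Gdim(A)$ to $Gpd_{A^e}(A)$, however, both contain genuine gaps, and they sit precisely where the paper deploys its two key external ingredients. In your reverse inequality you deduce, from $\Ext^i_{A^e}(A,A^e)=0$ for $i>m$ together with a ``symmetric computation'', that $\Omega^m_{A^e}(A)$ is totally reflexive, hence Gorenstein projective. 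This inference is exactly the implication ``$\Ext$-orthogonal to the ring $\Rightarrow$ Gorenstein projective'', which fails for general algebras (the counterexamples of \cite{JS} are what this paper is built around), and you are applying it over $A^e$, which is not known to be left nearly Gorenstein even when $A$ is --- whether that property passes to tensor products is literally Question 3 at the end of the paper. Moreover, total reflexivity of $X=\Omega^m_{A^e}(A)$ requires, besides $\Ext^{>0}_{A^e}(X,A^e)=0$, also $\Ext^{>0}_{(A^e)^{op}}(X^{*},A^e)=0$ for the dual $X^{*}=\Hom_{A^e}(X,A^e)$ and reflexivity of $X$; your symmetric computation over $A^{op}$ only produces vanishing of $\Ext$ of $A$ over the opposite enveloping algebra, which is a different module entirely. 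The paper closes this step by a fact you never invoke: when $A$ is Gorenstein, $A^e$ is itself Gorenstein, so every $A^e$-module has finite Gorenstein projective dimension, and then Proposition \ref{chprop} gives $Gpd_{A^e}(A)=\sup\{i\mid \Ext^i_{A^e}(A,A^e)\neq 0\}=\sup\{i\mid \Ext^i_A(D(A),A)\neq 0\}=pd_A(D(A))=Gdim(A)$.

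The forward inequality has the gap you flag yourself, and your proposed repair does not close it. If $T$ is a complete resolution of $G=\Omega^n_{A^e}(A)$, acyclicity of $M\otimes_A T$ is equivalent to $\operatorname{Tor}^A_1(M,Z)=0$ for \emph{all} cycles $Z$ of $T$; your dimension shift controls only the cycles on the resolution side of $G$, while for a cycle $Z_i$ with $i<n$ (the coresolution side) the long exact sequences give $\operatorname{Tor}^A_{j+1}(M,Z_i)\cong\operatorname{Tor}^A_j(M,Z_{i+1})$ for $j\geq 1$ and say nothing about $\operatorname{Tor}^A_1(M,Z_i)$, so $M\otimes_A T$ may fail to be exact there; and even granting acyclicity, $\Hom_A(-,Q)$-exactness of the image complex is a further unverified condition that the restriction lemma does not supply. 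The paper never needs this transfer: instead of pushing a Gorenstein projective resolution of $A$ through $M\otimes_A-$, it bounds $Gpd_{A^e}(A)$ from below by $\sup\{i\mid \Ext^i_A(D(A),A)\neq 0\}$ via Proposition \ref{chprop} and Lemma \ref{lemmata}, and in the non-Gorenstein case shows this supremum is infinite: otherwise some syzygy $\Omega^r(D(A))$ would be a stable module, hence Gorenstein projective by the left nearly Gorenstein hypothesis, contradicting Proposition \ref{injgorproprop} (no syzygy of an injective module of infinite projective dimension is Gorenstein projective, $D(A)$ having infinite projective dimension here by Gorenstein symmetry). So as written, neither of your two inequalities is established; what you have genuinely proved is only $Gdim(A)=\operatorname{injdim}(A_A)$.
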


We obtain three applications of the the theorem.
The first generalises the result of Happel to arbitrary algebras of finite global dimension, without restrictions on the field.

\begin{proposition*}
Let $A$ be an algebra of finite global dimension.
Then $gldim(A)=Gpd_{A^e}(A)$.
\end{proposition*}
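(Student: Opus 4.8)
The plan is to deduce this proposition directly from the main theorem, combined with two standard comparisons between ordinary and Gorenstein homological dimensions. Since the theorem already supplies the identity $Gdim(A)=Gpd_{A^e}(A)$ for every left nearly Gorenstein algebra, the only content left is to verify that an algebra of finite global dimension belongs to this class and that its Gorenstein dimension agrees with its global dimension.

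First I would observe that an algebra $A$ with $gldim(A)<\infty$ is Gorenstein: every module, and in particular each of the regular modules $A_A$ and ${}_A A$, has injective dimension bounded by $gldim(A)$, so both one-sided injective dimensions of the regular module are finite, which is exactly the Iwanaga--Gorenstein condition. As recalled in the introduction, Gorenstein algebras are left nearly Gorenstein, so the hypothesis of the theorem is met and we obtain $Gdim(A)=Gpd_{A^e}(A)$.

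The next step is to identify $Gdim(A)$ with $gldim(A)$ under the finiteness assumption. Here I would invoke two classical facts from Gorenstein homological algebra. On the one hand, for a Gorenstein algebra the Gorenstein dimension equals the supremum of the Gorenstein projective dimensions of all modules, $Gdim(A)=\sup\{Gpd_A(M) : M \in mod-A\}$, the Gorenstein analogue of the description of $gldim(A)$ as a supremum of projective dimensions. On the other hand, whenever a module $M$ has finite projective dimension one has $Gpd_A(M)=pd_A(M)$. Since $gldim(A)<\infty$ forces $pd_A(M)\leq gldim(A)<\infty$ for every $M$, these two facts combine to give
\[Gdim(A)=\sup_{M}Gpd_A(M)=\sup_{M}pd_A(M)=gldim(A).\]

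Combining the two identities yields $gldim(A)=Gdim(A)=Gpd_{A^e}(A)$, as claimed. I do not expect a genuine obstacle: the theorem does all the work, and the remaining comparisons are standard. The only point requiring care is to make sure the meaning of $Gdim(A)$ used in the statement of the theorem, namely the common value of the one-sided injective dimensions of the regular module, coincides with the one appearing as the supremum of Gorenstein projective dimensions, so that the chain of equalities above is legitimate.
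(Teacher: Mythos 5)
Your proposal is correct and follows essentially the same route as the paper: the paper also deduces the proposition as a special case of the main theorem, citing the standard fact (recalled in its preliminaries) that an algebra of finite global dimension is Gorenstein with $Gdim(A)=gldim(A)$, hence left nearly Gorenstein. The only difference is that you spell out the proofs of these standard comparisons ($Gpd_A(M)=pd_A(M)$ for $pd_A(M)<\infty$, and $Gdim(A)=\sup_M Gpd_A(M)$), which the paper simply quotes.
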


The next application generalises a characterisation of selfinjective algebras from \cite{S} proposition 5.3., where the same result was proven with the stronger assumption that the algebra is Gorenstein:

\begin{proposition*}
Let $A$ be a left nearly Gorenstein algebra. Then $A$ is selfinjective iff $A$ is Gorenstein projective as an $A^e$-module.
\end{proposition*}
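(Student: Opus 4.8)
The plan is to express each of the two conditions in the statement as the vanishing of a Gorenstein dimension, and then to read off the equivalence directly from the main theorem $Gdim(A)=Gpd_{A^e}(A)$. On the bimodule side this is immediate: since $A \neq 0$, the regular bimodule $A$ is Gorenstein projective over $A^e$ exactly when $Gpd_{A^e}(A)=0$, by the very definition of Gorenstein projective dimension. The substance of the argument is therefore to show that, on the module side, $A$ is selfinjective if and only if $Gdim(A)=0$.

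For this equivalence I would argue as follows. If $A$ is selfinjective, then projectives and injectives coincide and every module admits a complete projective resolution, so every module is Gorenstein projective; hence $Gpd_A(M)=0$ for all $M$ and $Gdim(A)=0$. Conversely, $Gdim(A)=0$ says that the Gorenstein (global) dimension vanishes, which forces $A$ to be Gorenstein with $id(A_A)=id({}_A A)=0$; in particular the regular right module is injective, i.e.\ $A$ is selfinjective. Both implications are standard characterisations of selfinjective algebras through Gorenstein homological algebra and require no input from the nearly Gorenstein hypothesis.

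Combining the two reformulations with the main theorem yields the chain $A \text{ selfinjective} \iff Gdim(A)=0 \iff Gpd_{A^e}(A)=0 \iff A \text{ Gorenstein projective over } A^e$, which is exactly the assertion. The only place where the hypothesis that $A$ is left nearly Gorenstein enters is the middle equality, supplied by the main theorem, and this is precisely what lets us drop the stronger assumption that $A$ is Gorenstein imposed in \cite{S}. I expect the main (and only genuine) obstacle to be the careful handling of the boundary case $Gdim(A)=0$: one must ensure that the vanishing of the Gorenstein global dimension really detects selfinjectivity, rather than the weaker and automatic fact that $A$ is Gorenstein projective over itself. Once that standard fact is in place, the proof closes formally.
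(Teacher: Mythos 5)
Your proof is correct and follows essentially the same route as the paper: both arguments translate ``Gorenstein projective over $A^e$'' into $Gpd_{A^e}(A)=0$, translate selfinjectivity into $Gdim(A)=0$, and then let the main theorem supply the middle equivalence. The only difference is that you spell out the standard fact that $Gdim(A)=0$ characterises selfinjective algebras, which the paper asserts without elaboration.
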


Recall that the first Tachikawa conjecture states that $Ext^i(D(A),A) \neq 0$ for some $i \geq 1$ for any non-selfinjective algebra $A$, see \cite{Ta}. Note that the first Tachikawa conjecture is trivially true for Gorenstein algebras with positive Gorenstein dimension $g$, since $Ext^g(D(A),A) \neq 0$ because $D(A)$ has projective dimension $g$.
We prove a much stronger version of the first Tachikawa conjecture for left nearly Gorenstein algebras:
\begin{proposition*}
Let $A$ be a left nearly Gorenstein algebra that is not Gorenstein.
Then $Ext^i(D(A),A) \neq 0$ for infinitely many $i \geq 1$.
\end{proposition*}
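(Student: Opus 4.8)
The plan is to argue by contraposition: I will assume that $\Ext^i_A(D(A),A)$ is nonzero for only finitely many $i \geq 1$ and deduce that $A$ is Gorenstein, contradicting the hypothesis. So suppose there is an $N$ with $\Ext^i_A(D(A),A)=0$ for all $i \geq N+1$. Let $\Omega$ denote the syzygy operator for right $A$-modules. By dimension shifting one has $\Ext^i_A(\Omega^N D(A),A)\cong \Ext^{i+N}_A(D(A),A)$ for every $i\geq 1$, and the right-hand side vanishes by assumption. Thus $\Omega^N D(A)$ lies in the subcategory $\{X : \Ext^i_A(X,A)=0 \text{ for all } i\geq 1\}$, which by the defining property of left nearly Gorenstein algebras coincides with the subcategory of Gorenstein projective modules. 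Hence $\Omega^N D(A)$ is Gorenstein projective, and therefore $Gpd_A(D(A))\leq N<\infty$.

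It remains to pass from $Gpd_A(D(A))<\infty$ to the Gorensteinness of $A$, and this is the step I expect to be the main obstacle. The naive route through the duality identity $id({}_AA)=pd_A(D(A))$ is not available, since finiteness of the Gorenstein projective dimension does not in general force finiteness of the ordinary projective dimension, so $Gpd_A(D(A))<\infty$ need not give $pd_A(D(A))<\infty$. Instead I would dualize: the functor $D=\Hom_K(-,K)$ is an exact contravariant duality between right and left $A$-modules that interchanges projectives and injectives, hence it carries a finite Gorenstein projective resolution of the right module $D(A)$ to a finite Gorenstein injective coresolution of the left module ${}_AA$. This gives $Gid({}_AA)=Gpd_A(D(A))<\infty$, i.e.\ the regular module has finite Gorenstein injective dimension on the left side. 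I would then invoke the characterisation from Gorenstein homological algebra that an Artin algebra is Gorenstein exactly when its regular module has finite Gorenstein injective dimension (equivalently, when the injective cogenerator $D(A)$ has finite Gorenstein projective dimension). Applying this yields that $A$ is Gorenstein, the desired contradiction.

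The essential difficulty is thus concentrated in that last implication, and it genuinely requires a homological input rather than a formal closure argument. Indeed, once $D(A)$ has finite Gorenstein projective dimension the class of modules of finite Gorenstein projective dimension contains every injective module, is resolving, and satisfies the two-out-of-three property in short exact sequences; but the induction along syzygies and cosyzygies that this supplies only terminates for modules whose ordinary projective or injective dimension is finite, so one cannot conclude that every module has finite Gorenstein projective dimension by soft arguments alone. Granting the stated characterisation, the proof is complete. One may also observe qualitatively why the conclusion must concern infinitely many degrees: for a non-Gorenstein $A$ the computation above shows $Gpd_A(D(A))=\infty$, so no single syzygy of $D(A)$ is Gorenstein projective, and the nonvanishing of $\Ext^i_A(D(A),A)$ can never stabilise to zero.
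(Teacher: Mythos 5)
Your first paragraph reproduces, correctly, the opening move of the paper's own argument (Case 2 of Theorem \ref{mainresult}): assuming $Ext_A^i(D(A),A)=0$ for all $i>N$, dimension shifting shows $\Omega^N(D(A))$ is a stable module, hence Gorenstein projective by the left nearly Gorenstein hypothesis, so $Gpd_A(D(A))\leq N<\infty$. The gap is in the second half. The ``characterisation'' you invoke --- that an Artin algebra is Gorenstein as soon as $Gid({}_AA)<\infty$, equivalently as soon as $Gpd_A(D(A))<\infty$ --- is not an available theorem for general Artin algebras; it is an open problem at least as strong as the Gorenstein symmetry conjecture. Indeed, if $id({}_AA)<\infty$ then $pd_A(D(A))=id({}_AA)<\infty$, hence a fortiori $Gpd_A(D(A))<\infty$, so your characterisation would give the implication $id({}_AA)<\infty \Rightarrow A$ Gorenstein, which is exactly the (open) Gorenstein symmetry conjecture. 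The result that actually is available, and is what the paper cites (corollary 3.2.6 of \cite{Che}), requires \emph{every} module to have finite Gorenstein projective dimension; as you yourself observe, one cannot pass from finiteness of $Gpd$ on the single module $D(A)$ to finiteness on all modules by soft closure arguments. So ``granting the stated characterisation'' amounts to granting precisely the point at issue; your dualisation to $Gid({}_AA)$ is correct but does not help, since the statement is equally unavailable on either side.

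What closes this gap in the paper is Proposition \ref{injgorproprop} (due to Shen): an injective module of infinite projective dimension has no Gorenstein projective syzygy $\Omega^m(I)$ with $m\geq 1$ (the proof shows every syzygy of an injective module lies in $Gp^{\perp}$, and a Gorenstein projective module in $Gp^{\perp}$ must be projective). With this, your argument can be completed as follows: your first step produces a Gorenstein projective syzygy $\Omega^N(D(A))$; by Proposition \ref{injgorproprop} this forces $pd_A(D(A))<\infty$, i.e.\ $id({}_AA)<\infty$; since left nearly Gorenstein algebras are known to satisfy the Gorenstein symmetry conjecture (see \cite{Mar}), this gives $id(A_A)<\infty$ as well, so $A$ is Gorenstein, the desired contradiction. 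In other words, the ``homological input'' you correctly identified as the essential difficulty is not a citable known characterisation: for left nearly Gorenstein algebras it is true, but its proof consists of exactly these two ingredients (Shen's proposition plus the Gorenstein symmetry property of such algebras), which is the content your proposal is missing.
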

We assume the reader is familiar with the basics of representation theory and homological algebra of finite dimensional algebras as explained for example in the book \cite{SkoYam}. For the basics on Gorenstein homological algebra for finite dimensional algebras we refer to \cite{Che}.
The author is thankful to Dawei Shen for useful comments and spotting an error in an earlier version. Proposition 2.2. of this article is due to Dawei Shen. The author is thankful to Matthew Pressland for useful discussions.
\section{Preliminaries}

Recall that an algebra $A$ is called \emph{Gorenstein} in case the left and right regular modules have finite injective dimensions and those dimensions coincide.
A complex $\cdots P_1 \xrightarrow{d_1} P_0 \xrightarrow{d_0} P_{-1} \rightarrow \cdots$ of (possibly infinite dimensional) projective modules $P_i$ is called \emph{totally acyclic} in case the complex is acyclic and the complex remains acyclic when the functor $Hom_A(-,Q)$ is applied for any (possibly infinite dimensional) projective module $Q$. A (possibly infinite dimensional) module $M$ is called \emph{Gorenstein projective} in case it is the kernel of a map $d_0$ in a totally acyclic complex as above. The \emph{Gorenstein projective dimension} $Gpd_A(M)$ of an $A$-module $M$ is defined as follows: $Gpd_A(M) \leq n$ for $n \geq 0$ iff there is an exact sequence of the form $0 \rightarrow G^{-n} \rightarrow \cdots \rightarrow G^{-1} \rightarrow G^0 \rightarrow M \rightarrow 0$ for (possibly infinite dimensional) Gorenstein projective modules $G^{-i}$.
One can show that an algebra is Gorenstein iff every module has finite Gorenstein projective dimension, see corollary 3.2.6. of \cite{Che}.
We refer to \cite{Che} for more characterisations of Gorenstein modules and the Gorenstein projective dimension. Note that algebras of finite global dimension are Gorenstein algebras and the global dimension coincides with the Gorenstein dimension in this case. In the case of finite global dimension all Gorenstein projective modules are projective and the Gorenstein projective dimension coincides with the usual projective dimension.
We note that the only occurence of infinite dimensional modules is in the calculation of Gorenstein projective dimensions in this article.
If not explicitly noted all modules are finite dimensional. 
An $A$-module $M$ is called a \emph{stable module} in case $Ext^i(M,A)=0$ for all $i>0$.
Following \cite{Mar}, an algebra $A$ is called \emph{left nearly Gorenstein} in case the full subcategory of Gorenstein projective modules coincides with the full subcategory of stable modules.
For example all Gorenstein algebras and all representation finite algebras are left nearly Gorenstein, see \cite{Mar} for more examples and information.

The famous Gorenstein symmetry conjecture, see for example \cite{ARS} in the section about conjectures, states that the left and the right injective dimension of the regular module always coincide. The Gorenstein symmetry conjecture is open in general but true for left nearly Gorenstein algebras, see for example \cite{Mar} for a short proof.

\begin{proposition} \label{chprop}
Let $M$ be an $A$-module, then $Gpd_A(M) \geq \sup \{ i \geq 0 | Ext_A^i(M,A) \neq 0 \}$ with equality in case $Gpd_A(M)$ is finite.
\end{proposition}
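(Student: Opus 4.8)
The plan is to prove the two assertions separately: first the inequality for an arbitrary module $M$, and then the reverse inequality under the hypothesis that $Gpd_A(M)$ is finite. The inequality is the routine part and I would carry it out by hand; the equality is where the genuinely Gorenstein content sits, and I would reduce it to the standard homological characterisation of Gorenstein projective dimension.

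For the inequality the crucial input is that every Gorenstein projective module $G$ satisfies $Ext^i_A(G,A)=0$ for all $i\geq 1$. This is immediate from the definition: if $G=\ker(d_0)$ in a totally acyclic complex $\cdots\to P_1\xrightarrow{d_1}P_0\xrightarrow{d_0}P_{-1}\to\cdots$, then total acyclicity says exactly that $Hom_A(-,A)$ applied to this complex stays acyclic, and reading off cohomology in the appropriate degrees yields the vanishing. If $Gpd_A(M)=\infty$ the inequality is vacuous, so I assume $Gpd_A(M)=n<\infty$ and fix an exact sequence $0\to G^{-n}\to\cdots\to G^0\to M\to 0$ with all $G^{-j}$ Gorenstein projective. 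Putting $\Omega=\ker(G^0\to M)$ and applying $Hom_A(-,A)$ to $0\to\Omega\to G^0\to M\to 0$, the vanishing of $Ext^i_A(G^0,A)$ for $i\geq 1$ yields $Ext^{i+1}_A(M,A)\cong Ext^i_A(\Omega,A)$ for $i\geq 1$. Iterating this dimension shift $n$ times identifies $Ext^{n+j}_A(M,A)$ with $Ext^j_A(G^{-n},A)$ for $j\geq 1$, which is zero since $G^{-n}$ is Gorenstein projective. Hence $Ext^i_A(M,A)=0$ for all $i>n$, i.e. $Gpd_A(M)\geq\sup\{i\geq 0\mid Ext^i_A(M,A)\neq 0\}$.

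For the equality, with $Gpd_A(M)=n<\infty$, I set $s=\sup\{i\geq 0\mid Ext^i_A(M,A)\neq 0\}$; the previous paragraph gives $s\leq n$, so it remains to prove $n\leq s$. Here I would invoke the sharpened characterisation from Gorenstein homological algebra (see \cite{Che}): a module of finite Gorenstein projective dimension satisfies $Gpd_A(M)\leq m$ as soon as $Ext^i_A(M,Q)=0$ for all $i>m$ and all projective modules $Q$. Applied with $m=s$ this forces $n=Gpd_A(M)\leq s$, giving equality. To use it with the single test module $A$ in place of arbitrary projectives, I rely on the fact that over an Artin algebra $M$ has a projective resolution by finitely generated projectives, so $Ext^i_A(M,-)$ commutes with arbitrary direct sums; thus $Ext^i_A(M,A^{(I)})\cong Ext^i_A(M,A)^{(I)}$, and as every projective $Q$ is a summand of some $A^{(I)}$, the vanishing $Ext^i_A(M,A)=0$ is equivalent to $Ext^i_A(M,Q)=0$ for all projective $Q$.

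The step I expect to be the main obstacle is precisely this reverse direction, since it is the point where finiteness of the Gorenstein projective dimension is essential: the classical analogue detecting projective dimension by $Ext$ into $A$ is elementary, whereas here one must know that the vanishing of $Ext^{>s}_A(M,A)$ actually upgrades to the bound $Gpd_A(M)\leq s$, which can fail without the finiteness assumption. I would also treat the degree-zero boundary case explicitly: when $n=0$, so $M$ is a nonzero Gorenstein projective module, the inclusion $M\hookrightarrow P_0$ into the projective $P_0$ of its totally acyclic complex shows $Hom_A(M,A)\neq 0$, whence $s=0=n$ and the equality persists.
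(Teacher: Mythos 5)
Your proposal is correct, and it proves strictly more than the paper writes down: the paper's own proof is a two-line deferral, disposing of the infinite case as vacuous and citing Proposition 3.2.2 of \cite{Che} for the equality in the finite case. You instead establish the inequality $Gpd_A(M) \geq \sup \{ i \geq 0 \mid Ext_A^i(M,A) \neq 0 \}$ by hand — via the $Ext$-vanishing of Gorenstein projectives against $A$ (read off from total acyclicity) and dimension shifting along a finite Gorenstein projective resolution — and only invoke the literature for the reverse bound, in the form of the characterisation ``finite $Gpd_A(M) \leq m$ once $Ext_A^i(M,Q)=0$ for $i>m$ and all projectives $Q$'' (Holm's theorem, also in \cite{Che}), together with the direct-sum reduction showing that the single test module $A$ suffices for finitely generated $M$. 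This is sound: the dimension-shifting argument is the standard one, your identification of where finiteness of $Gpd_A(M)$ is genuinely needed is exactly right (the reverse bound fails without it), and the reduction $Ext_A^i(M,A^{(I)}) \cong Ext_A^i(M,A)^{(I)}$ is legitimate since $M$ admits a resolution by finitely generated projectives. Two cosmetic remarks: the version of the characterisation stated in \cite{Che} for Artin algebras already uses $A$ itself as the test module, so your reduction step, while correct, could be bypassed by citing that formulation directly; and in your degree-zero case, the conclusion $Hom_A(M,A) \neq 0$ from the embedding $M \hookrightarrow P_0$ implicitly uses that $P_0$ is a summand of a free module, so that some component map $M \rightarrow A$ is nonzero — worth one clause to make explicit.
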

\begin{proof}
In case $Gpd_A(M)$ is infinite there is nothing to show. In case $Gpd_A(M)$ is finite one has $Gpd_A(M) = \sup \{ i \geq 0 | Ext_A^i(M,A) \neq 0 \}$, this is contained in proposition 3.2.2. of \cite{Che}.
\end{proof}

\section{Main results}

\begin{lemma} \label{lemmata}
We have $Ext_A^i(D(A),A) \cong Ext_{A^e}^i(A, A^{e})$ for all $i \geq 1$.
\end{lemma}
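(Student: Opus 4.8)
The plan is to reduce the bimodule $\Ext$ on the right-hand side to a one-sided $\Ext$ by rewriting the coefficient bimodule $A^e$ and then invoking a standard reduction isomorphism. First I would record that, as an $A$-bimodule, the regular module $A^e$ is the free bimodule $A\otimes_K A$, and that there is a natural isomorphism of $A$-bimodules
\[
A\otimes_K A \;\cong\; \Hom_K\big(D(A),A\big),
\]
where $D(A)$ and $A$ are regarded as right $A$-modules and the right-hand side carries its canonical bimodule structure $(a\cdot f\cdot b)(\phi)=f(\phi a)\,b$. Concretely this isomorphism sends $x\otimes y$ to the map $\phi\mapsto\phi(x)\,y$, and I would check by a short direct computation that it intertwines the outer bimodule action on $A\otimes_K A$ with the displayed action on $\Hom_K(D(A),A)$.

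With this identification in hand, the heart of the argument is the standard reduction isomorphism
\[
\Ext_{A^e}^i\big(A,\Hom_K(M,N)\big)\;\cong\;\Ext_A^i(M,N),
\]
valid for right $A$-modules $M,N$ whenever $M$ is flat over the ground ring, which is automatic here since $K$ is a field. Applying this with $M=D(A)$ and $N=A$ and combining it with the bimodule isomorphism above yields
\[
\Ext_{A^e}^i(A,A^e)\;\cong\;\Ext_{A^e}^i\big(A,\Hom_K(D(A),A)\big)\;\cong\;\Ext_A^i\big(D(A),A\big)
\]
for all $i\geq 0$, in particular for all $i\geq 1$, which is the claim.

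If I wanted to avoid quoting the reduction isomorphism, I would prove it in place: choose a resolution $\mathbf{P}\to A$ of $A$ by finitely generated projective $A^e$-modules and use the tensor-hom adjunction $\Hom_{A^e}\big(P,\Hom_K(M,N)\big)\cong\Hom_A\big(M\otimes_A P,N\big)$, where $M\otimes_A P$ is formed using the left $A$-structure of the bimodule $P$. The key observation is that restricting a projective $A^e$-module to one side produces a free one-sided $A$-module, so that $M\otimes_A\mathbf{P}$ is a projective resolution of $M\otimes_A A=M$; applying $\Hom_A(-,N)$ and passing to cohomology then returns $\Ext_A^i(M,N)$, while the left-hand complex computes $\Ext_{A^e}^i\big(A,\Hom_K(M,N)\big)$.

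The main obstacle I anticipate is bookkeeping rather than genuine difficulty: one must track carefully which of the two commuting $A$-actions (left versus right, and the $A^{op}$ factor of $A^e$) is being used at each step, so that the bimodule isomorphism $A^e\cong\Hom_K(D(A),A)$ and the module structures entering the adjunction are genuinely compatible. Once the sides are fixed consistently, both the explicit bimodule isomorphism and the adjunction are routine, and the flatness hypothesis needed for the reduction step costs nothing over a field.
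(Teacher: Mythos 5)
Your proposal is correct, but it takes a genuinely different route from the paper. The paper's proof is essentially a two-line citation: it invokes Tachikawa's result (page 114 of [Ta]) that $\Ext_A^i(D(A),A) \cong \Ext_{A^e}^i(A, A\otimes_K A)$ for $i \geq 1$, and then observes that $A \otimes_K A \cong A^e$ as bimodules. What you have done is, in effect, reconstruct a proof of Tachikawa's lemma itself: the bimodule isomorphism $A^e \cong \Hom_K(D(A),A)$ (which checks out with the outer bimodule structure on $A\otimes_K A$ and the standard structure $(a\cdot f\cdot b)(\phi)=f(\phi a)b$ on $\Hom_K(D(A),A)$, since $(\phi a)(x)=\phi(ax)$), followed by the reduction isomorphism $\Ext_{A^e}^i\bigl(A,\Hom_K(M,N)\bigr)\cong\Ext_A^i(M,N)$, whose proof via the adjunction $\Hom_{A^e}\bigl(P,\Hom_K(M,N)\bigr)\cong\Hom_A(M\otimes_A P,N)$ and the fact that $A^e$-projectives restrict to free one-sided modules is exactly the classical Cartan--Eilenberg-style argument. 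Your approach buys self-containedness (no reliance on locating the precise statement in [Ta]) and actually yields the isomorphism in degree $i=0$ as well, which the cited result does not claim; the paper's approach buys brevity. One small point worth making explicit in your fallback argument: exactness of $M\otimes_A \mathbf{P}\to M$ follows because the restricted complex of left $A$-modules is contractible ($A$ itself being projective on one side, the exact complex of one-sided projectives splits), not merely from flatness of $M$ over $K$; over a field this is immediate, but it is the restriction step, not flatness of $M$, that does the work.
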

\begin{proof}
On page 114 of \cite{Ta}, Tachikawa proves that $Ext_A^i(D(A),A) \cong Ext_{A^e}^i(A, A \otimes_K A)$ for all $i \geq 1$. Now note that as $A$-bimodules we have $A \otimes_K A \cong A^e$.
Thus $Ext_A^i(D(A),A) \cong Ext_{A^e}^i(A, A \otimes_K A) \cong Ext_{A^e}^i(A, A^e)$ for all $i \geq 1$.
\end{proof}

The following proposition and its proof are due to Dawei Shen:
\begin{proposition} \label{injgorproprop}
Let $I$ be an injective module of infinite projective dimension. Then there is no $m \geq 1$ such that $\Omega^m(I)$ is Gorenstein projective.
\end{proposition}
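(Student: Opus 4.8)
The plan is to argue by contradiction: I would suppose that $\Omega^m(I)$ is Gorenstein projective for some $m \geq 1$ and deduce that the injective module $I$ has \emph{finite} projective dimension, contradicting the hypothesis. The whole argument splits into an easy homological reduction and one genuinely Gorenstein-homological input.

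\smallskip

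First I would record that $Gpd_A(I)$ is finite, indeed $Gpd_A(I) \leq m$. Taking a projective resolution of $I$ and truncating it yields an exact sequence $0 \to \Omega^m(I) \to P_{m-1} \to \cdots \to P_0 \to I \to 0$ in which the leftmost term $\Omega^m(I)$ is Gorenstein projective by assumption and every $P_i$ is projective, hence also Gorenstein projective. This is exactly a Gorenstein projective resolution of $I$ of length $m$, so by the definition of the Gorenstein projective dimension recalled above we get $Gpd_A(I) \leq m < \infty$. This step is routine dimension shifting and carries no real difficulty.

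\smallskip

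The heart of the proof is to upgrade finite Gorenstein projective dimension to finite ordinary projective dimension using injectivity. Here I would invoke the approximation theory for modules of finite Gorenstein projective dimension (see \cite{Che}): since $Gpd_A(I) < \infty$, there is a short exact sequence $0 \to I \to L \to G \to 0$ in which $L$ has finite projective dimension and $G$ is Gorenstein projective. Because $I$ sits as the \emph{submodule} and is injective, this sequence splits, so $I$ is a direct summand of $L$ and therefore $pd_A(I) \leq pd_A(L) < \infty$. This contradicts the assumption that $I$ has infinite projective dimension, and the proposition follows.

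\smallskip

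I expect the main obstacle to be the correct choice of approximation sequence in the last step. One must use the ``finite-projective-dimension preenvelope'' that places $I$ on the left as a submodule of a module of finite projective dimension, rather than the more frequently quoted Gorenstein projective precover $0 \to K \to G' \to I \to 0$, in which $I$ appears as a quotient: injectivity of $I$ splits the former but gives nothing for the latter. One should also be careful not to overstate the conclusion, since an injective module of finite Gorenstein projective dimension need only have finite projective dimension and need not be projective; finiteness of $pd_A(I)$ is already enough to contradict the hypothesis.
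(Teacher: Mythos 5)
Your proof is correct, but it takes a genuinely different route from the paper's. The paper (following Dawei Shen) works inside the right orthogonal category $Gp^{\perp}$ of modules $M$ with $\Ext^i(G,M)=0$ for all $i>0$ and all Gorenstein projective $G$: it shows $Gp^{\perp}$ contains all projectives and all injectives and is closed under kernels of surjections, hence contains every syzygy $\Omega^r(I)$; since a Gorenstein projective module lying in $Gp^{\perp}$ must already be projective (appendix A of \cite{Che}), the assumption that some $\Omega^m(I)$ is Gorenstein projective would force it to be projective, contradicting $pd_A(I)=\infty$. You instead reduce to $Gpd_A(I)\le m<\infty$ and then invoke the Auslander--Buchweitz-type approximation: a hull $0\to I\to L\to G\to 0$ with $pd_A(L)<\infty$ and $G$ Gorenstein projective, which splits because $I$ is an injective submodule, giving $pd_A(I)\le pd_A(L)<\infty$, a contradiction. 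Your caveat about choosing the hull rather than the precover $0\to K\to G'\to I\to 0$ (where $I$ is a quotient and injectivity gives nothing) is exactly the right point; note also that the hull can be manufactured from the precover by pushing out along an embedding $G'\hookrightarrow P$ into a projective, so the two black boxes are of comparable depth. The trade-off: the paper's argument is self-contained dimension shifting plus the single fact that Gorenstein projectives in $Gp^{\perp}$ are projective, and it yields the sharper conclusion that the syzygies themselves would have to be projective; your argument is shorter and isolates a clean general principle of independent interest (an injective module of finite Gorenstein projective dimension has finite projective dimension, essentially Holm's equality of $Gpd$ and $pd$ for modules of finite injective dimension), at the cost of citing the approximation theorem wholesale.
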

\begin{proof}
We denote by $Gp^{\perp}$ the full subcategory $Gp^{\perp}:=\{ M | Ext^i(G,M)=0$ for all $i >0$ and any Gorenstein projective module $G \}$. Note that this coincides with $\{ M | Ext^1(G,M)=0$ for any Gorenstein projective module $G \}$ since with $G$ also $\Omega^r(G)$ is Gorenstein projective for any $r \geq 1$. Note that $Gp^{\perp}$ contains all projective and all injective modules.
We show that $Gp^{\perp}$ is also closed under kernels of surjections:
Assume there is a short exact sequence of modules
$$0 \rightarrow X \rightarrow X_1 \rightarrow X_2 \rightarrow 0 \ (*)$$
 with $X_1, X_2 \in Gp^{\perp}$.
Let $G$ be a Gorenstein projective module. By definition of Gorenstein projective modules there is a short exact sequence $0 \rightarrow G \rightarrow P \rightarrow G_1 \rightarrow 0$ with $G_1$ also Gorenstein projective and $P$ being projective.
Applying the functor $Hom_A(G_1,-)$ to the short exact sequence $(*)$ and noting that $X_1, X_2 \in Gp^{\perp}$, we obtain the exact sequence:
$$0 \rightarrow Hom_A(G_1,X) \rightarrow Hom_A(G_1,X_1) \rightarrow Hom_A(G_1,X_2) \rightarrow Ext^1(G_1,X) \rightarrow 0 \rightarrow 0 \rightarrow Ext^2(G_1,X) \rightarrow 0 \cdots$$
This shows that $Ext^2(G_1,X)=0$ and thus also $Ext^1(G,X)=Ext^1(\Omega^1(G_1),X)=Ext^2(G_1,X)=0$.
Thus $X \in Gp^{\perp}$.

Now if $I$ is as in the proposition, we have $\Omega^r(I) \in Gp^{\perp}$ for all $ r \geq 0$ by induction, using that there are short exact sequences 
$$0 \rightarrow \Omega^r(D(A)) \rightarrow P_r \rightarrow \Omega^{r-1}(D(A)) \rightarrow 0,$$
with $P_r$ projective. In case $\Omega^r(I)$ is Gorenstein projective, we have that $\Omega^r(I)$ is in fact projective since all Gorenstein projective modules in $Gp^{\perp}$ are projective, see for example appendix A of \cite{Che}.
But $\Omega^r(I)$ being projective contradicts our assumption that $I$ has infinite projective dimension.

\end{proof}

\begin{theorem} \label{mainresult}
Let $A$ be a left nearly Gorenstein algebra, then $Gdim(A)=Gpd_{A^e}(A)= \sup \{ i \geq 0 | Ext_A^i(D(A),A) \neq 0 \}=Gpd_A(D(A))$.
\end{theorem}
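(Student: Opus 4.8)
The plan is to route all four quantities through the single invariant $s:=\sup\{i\ge 0\mid Ext_A^i(D(A),A)\neq 0\}$, settling everything on the $A$-module side first, where the hypothesis of being left nearly Gorenstein is available, and only afterwards transporting the conclusion to $A^e$ via Lemma \ref{lemmata}. The first step is to show $Gpd_A(D(A))=s$. Proposition \ref{chprop} already gives $Gpd_A(D(A))\ge s$ in general, with equality whenever $Gpd_A(D(A))$ is finite, so the only thing to exclude is that $Gpd_A(D(A))=\infty$ while $s<\infty$. If $s<\infty$ then $Ext_A^i(D(A),A)=0$ for $i>s$, so $\Omega^s(D(A))$ is a stable module and hence Gorenstein projective because $A$ is left nearly Gorenstein; then $\Omega^{s+1}(D(A))$ is Gorenstein projective as well. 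Since $Gpd_A(D(A))=\infty$ forces $pd_A(D(A))=\infty$, the existence of a Gorenstein projective syzygy $\Omega^{m}(D(A))$ with $m\ge 1$ contradicts Proposition \ref{injgorproprop} applied to the injective module $D(A)$. Thus $Gpd_A(D(A))=s$ without any finiteness assumption.

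The second step is to identify this common value with $Gdim(A)$, and the conceptual core is the equivalence: for a left nearly Gorenstein algebra, $Gpd_A(D(A))<\infty$ if and only if $A$ is Gorenstein. The nontrivial direction reuses the syzygy argument: if $Gpd_A(D(A))=n<\infty$ then $\Omega^{n+1}(D(A))$ is Gorenstein projective, so Proposition \ref{injgorproprop} forces $pd_A(D(A))<\infty$; via the duality $pd_A(D(A))=id({}_AA)$ (an exact duality turns a projective resolution of $D(A)$ in $mod\text{-}A$ into an injective coresolution of the left regular module) together with the Gorenstein symmetry that holds for left nearly Gorenstein algebras, this yields that $A$ is Gorenstein. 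When $A$ is not Gorenstein both $Gdim(A)$ and $Gpd_A(D(A))$ are therefore infinite and equal. When $A$ is Gorenstein of dimension $g$, the same duality gives $pd_A(D(A))=id({}_AA)=g$, and since projective and Gorenstein projective dimension agree once the former is finite we get $Gpd_A(D(A))=g=Gdim(A)$. Hence $Gdim(A)=Gpd_A(D(A))=s$ in all cases.

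The third step transports this to the enveloping algebra. By Lemma \ref{lemmata} the groups $Ext_{A^e}^i(A,A^e)$ and $Ext_A^i(D(A),A)$ agree for all $i\ge 1$, so the suprema of their nonvanishing positive degrees coincide, and Proposition \ref{chprop} applied over $A^e$ gives $Gpd_{A^e}(A)\ge s$. For the reverse inequality I would exploit that tensoring over a field preserves injectivity of finite-dimensional modules, so $D(A)\otimes_K D(A)\cong D(A^e)$ is the injective cogenerator of $A^e$ and totalising the injective coresolutions of $A$ over $A$ and over $A^{op}$ bounds $id(A^e)$ on each side by $2\,Gdim(A)$; hence $A$ Gorenstein implies $A^e$ Gorenstein, whence every $A^e$-module, in particular $A$, has finite Gorenstein projective dimension. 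Proposition \ref{chprop} then upgrades the inequality to $Gpd_{A^e}(A)=s$. The degree-zero terms only intervene in the boundary case $s=0$, which corresponds to $A$ selfinjective: there $A^e$ is selfinjective, so $Gpd_{A^e}(A)=0=s$ directly.

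The step I expect to be the main obstacle is the equivalence in the second paragraph, namely that $Gpd_A(D(A))$ is finite exactly when $A$ is Gorenstein; this is precisely where the left nearly Gorenstein hypothesis is indispensable, since it is what lets Proposition \ref{injgorproprop} and the Gorenstein symmetry conspire to forbid the pathological situation of a module of finite Gorenstein projective dimension but infinite projective dimension sitting on top of a non-Gorenstein algebra. The transport to $A^e$, by contrast, is comparatively soft once one observes that $A^e$ inherits the Gorenstein property, which is exactly the point at which the Gorenstein homological invariant $Gpd_{A^e}(A)$ behaves well even though the classical $pd_{A^e}(A)$ from Happel's formula does not.
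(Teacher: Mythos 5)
Your proposal is correct and takes essentially the same route as the paper: both arguments are driven by the identical combination of Lemma \ref{lemmata}, Proposition \ref{chprop}, and the syzygy-contradiction via Proposition \ref{injgorproprop} together with Gorenstein symmetry and the left nearly Gorenstein hypothesis, with the paper merely packaging this as a two-case split ($A$ Gorenstein versus $A$ of infinite Gorenstein dimension) instead of your three-step reduction through $s=\sup\{i \geq 0 \mid Ext_A^i(D(A),A)\neq 0\}$. The only substantive difference is that you spell out why $A$ Gorenstein implies $A^e$ Gorenstein (tensoring injective coresolutions and using $D(A)\otimes_K D(A)\cong D(A^e)$), a fact the paper asserts without proof in its Case 1.
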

\begin{proof}
First note that $A$ satisfies the Gorenstein symmetry conjecture since $A$ is assumed to be left nearly Gorenstein as was mentioned earlier. \newline
\underline{Case 1:} First assume that $A$ is Gorenstein. Then also $A^e$ is Gorenstein and the Gorenstein dimension of $A$ is equal to the projective dimension of $D(A)$ which is equal to $\sup \{ i \geq 0 | Ext_A^i(D(A),A) \neq 0 \}$, since for any module $M$ of finite projective dimension the projective dimension is equal to $\sup \{ i \geq 0 | Ext_A^i(M,A) \neq 0 \}$, see for example \cite{ARS} VI. lemma 5.5. Now since $A^e$ is Gorenstein, the Gorenstein projective dimension of every $A^e$-module is finite and by \ref{chprop}, $Gpd_{A^e}(A)= \sup \{ i \geq 0 | Ext_{A^e}^i(A, A^{e})\}$. Using \ref{lemmata}, this gives the theorem for $A$ being Gorenstein. \newline
\underline{Case 2:} Now assume that $A$ has infinite Gorenstein dimension. 
We proof that $Ext^i(D(A),A) \neq 0$ for infinitely many $i$ which gives the result, because $Gpd_{A^e}(A) \geq \sup \{ i \geq 0 | Ext_{A^e}^i(A, A^{e}) \neq 0 \} = \sup \{ i \geq 0 | Ext_A^i(D(A),A) \neq 0 \}$ and $Gpd_A(D(A)) \geq \sup \{ i \geq 0 | Ext_A^i(D(A),A) \neq 0 \}$ by \ref{lemmata} and \ref{chprop}.
Now assume that $Ext^i(D(A),A) \neq 0$ for only finitely many $i$.
Then there is a natural number $r$ such that $Ext^{r+i}(D(A),A) =0$ for all $i \geq 1$. This shows that $Ext^i(\Omega^r(D(A)),A)=Ext^{r+i}(D(A),A) =0$ for all $i \geq 1$. Thus $\Omega^r(D(A))$ is a stable module and thus Gorenstein projective since we assume that $A$ is left nearly Gorenstein. But by \ref{injgorproprop} this is a contradiction since $D(A)$ and thus $\Omega^r(D(A))$ have infinite projective dimension since we assume that $A$ is not Gorenstein. Thus we have $Ext^i(D(A),A) \neq 0$ for infinitely many $i$ and this shows the result.
\end{proof}
We remark that the proof of the theorem shows that the result holds for any algebras with $Ext^i(D(A),A) \neq 0$ for infinitely many $i \geq 1$.
We note this as a proposition:
\begin{proposition}
Let $A$ be an algebra with $Ext^i(D(A),A) \neq 0$ for infinitely many $i \geq 1$ then $Gdim(A)=Gpd_{A^e}(A)= \sup \{ i \geq 0 | Ext_A^i(D(A),A) \neq 0 \}=Gpd_A(D(A))$.
\end{proposition}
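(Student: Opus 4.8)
The plan is to observe that the hypothesis forces every one of the four quantities to be infinite, so the asserted equalities hold trivially; no finiteness case ever arises here. This is precisely the content of Case~2 in the proof of Theorem~\ref{mainresult}, extracted and run in reverse: there the infinitude of $Ext^i(D(A),A)$ was the \emph{conclusion} obtained from the left nearly Gorenstein hypothesis, whereas here it is the \emph{assumption}, so the left nearly Gorenstein property is no longer needed.

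First I would record that the hypothesis says exactly
$$\sup \{ i \geq 0 \mid Ext_A^i(D(A),A) \neq 0 \} = \infty.$$
Next I would transport this to the enveloping algebra via Lemma~\ref{lemmata}, which gives $Ext_A^i(D(A),A) \cong Ext_{A^e}^i(A,A^e)$ for all $i \geq 1$ and hence $\sup \{ i \geq 0 \mid Ext_{A^e}^i(A,A^e) \neq 0 \} = \infty$ as well. Applying Proposition~\ref{chprop} to the $A^e$-module $A$ then yields $Gpd_{A^e}(A) \geq \sup \{ i \mid Ext_{A^e}^i(A,A^e) \neq 0 \} = \infty$, and applying Proposition~\ref{chprop} to the $A$-module $D(A)$ yields $Gpd_A(D(A)) \geq \sup \{ i \mid Ext_A^i(D(A),A) \neq 0 \} = \infty$. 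Thus three of the four quantities are immediately seen to be infinite.

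It remains to treat $Gdim(A)$, where the point is simply that $A$ cannot be Gorenstein under the hypothesis. Indeed, if $A$ were Gorenstein then $D(A)$ would have finite projective dimension equal to $Gdim(A)$, and by \cite{ARS} VI.~Lemma~5.5 this projective dimension equals $\sup \{ i \geq 0 \mid Ext_A^i(D(A),A) \neq 0 \}$, which would then be finite, contradicting the hypothesis. Hence $A$ is not Gorenstein and $Gdim(A) = \infty$. With all four quantities equal to $\infty$, the chain of equalities follows. I do not anticipate any real obstacle: the argument is purely a repackaging of the inequalities from Proposition~\ref{chprop} together with the isomorphism of Lemma~\ref{lemmata}, and the only subtlety worth checking is that Proposition~\ref{chprop} is stated for an arbitrary algebra and module, so it applies equally to $A$ viewed as a module over $A^e$.
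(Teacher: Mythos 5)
Your proof is correct and is essentially the paper's own argument: the paper proves this proposition by remarking that it is exactly the computation in Case 2 of Theorem \ref{mainresult} (the inequalities from Proposition \ref{chprop} combined with Lemma \ref{lemmata}), which is what you have written out, together with the observation that the hypothesis rules out $A$ being Gorenstein (your appeal to \cite{ARS} VI.~Lemma~5.5, which is the same ingredient the paper uses in Case 1). Your explicit verification that $Gdim(A)=\infty$ under the hypothesis is a detail the paper leaves implicit, but it is the intended reading, not a different route.
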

This can be used to prove $Gdim(A)=Gpd_{A^e}(A)$ for algebras that are not left nearly Gorenstein. For example the commutative algebras in \cite{JS} are not left nearly Gorenstein, but computer experiments suggest that $Ext^i(D(A),A) \neq 0$ for infinitely many $i \geq 1$ for those algebras. We note that we are not aware of a commutative algebra that is not Gorenstein with $Ext^i(D(A),A) \neq 0$ for only finitely many $i \geq 1$.

We give 3 applications.
The first application generalises Happel's formula to algebras over fields that are not necessarily algebraically closed:
\begin{proposition}
Let $A$ be an algebra of finite global dimension, then $Gpd_{A^e}(A)=gldim(A)$.
\end{proposition}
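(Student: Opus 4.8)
The plan is to deduce this as an immediate corollary of Theorem \ref{mainresult}, since the hypothesis of finite global dimension places $A$ squarely inside the class of left nearly Gorenstein algebras. First I would invoke the two facts already recorded in the Preliminaries: an algebra of finite global dimension is automatically Gorenstein, and in that situation the Gorenstein dimension $Gdim(A)$ coincides with the ordinary global dimension $gldim(A)$. Moreover every Gorenstein algebra is left nearly Gorenstein, so $A$ satisfies the standing hypothesis of the main theorem.

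Applying Theorem \ref{mainresult} to $A$ then yields $Gdim(A)=Gpd_{A^e}(A)$. Combining this with the identity $Gdim(A)=gldim(A)$ from the first step gives $gldim(A)=Gpd_{A^e}(A)$, which is exactly the claim. One could equally route through the intermediate expression $\sup\{i\geq 0 \mid Ext_A^i(D(A),A)\neq 0\}$ appearing in the theorem, but since the theorem already asserts the equality with $Gpd_{A^e}(A)$ directly, this detour is unnecessary.

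I do not expect a genuine obstacle here: the whole content is the chain of implications ``finite global dimension $\Rightarrow$ Gorenstein $\Rightarrow$ left nearly Gorenstein'', which licenses the use of the main theorem, together with the collapse of $Gdim(A)$ to $gldim(A)$ in the finite-global-dimension case. The only point worth a moment's care is that one need not separately argue that $A^e$ is well behaved: the passage to the enveloping algebra is handled internally by Theorem \ref{mainresult} itself (through Lemma \ref{lemmata} and Proposition \ref{chprop}), so no independent computation on $A^e$ is required.
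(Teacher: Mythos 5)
Your proposal is correct and matches the paper's own proof: the paper likewise derives the statement as a special case of Theorem \ref{mainresult}, using that finite global dimension implies $A$ is Gorenstein (hence left nearly Gorenstein) and that $Gdim(A)=gldim(A)$ in this situation. Your write-up merely makes explicit the chain of implications that the paper leaves implicit.
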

\begin{proof}
This is a special case of \ref{mainresult} by noting that the global dimension coincides with the Gorenstein dimension in case the global dimension is finite.
\end{proof}
We remark that the previous proposition gives the correct result for the example in \ref{fexample}, since there $K \otimes_k K$ is a selfinjective algebra and thus the Gorenstein projective dimension of any module is zero because every module in a selfinjective algebra is Gorenstein projective.
The next application gives a characterisation when the regular module is Gorenstein-projective as a bimodule for left nearly Gorenstein algebras. This generalises proposition 5.3. of \cite{S}.
\begin{proposition}
Let $A$ be left nearly Gorenstein.
$A$ is a Gorenstein projective $A^e$-module iff $A$ is selfinjective.
\end{proposition}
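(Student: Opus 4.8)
The plan is to derive the statement directly from the main theorem \ref{mainresult}, which already carries the essential content. First I would reformulate both sides of the claimed equivalence through the single invariant $Gpd_{A^e}(A)$. By the definition of Gorenstein projective dimension recalled in Section 1, the nonzero module $A$ is Gorenstein projective as an $A^e$-module precisely when $Gpd_{A^e}(A)=0$: the inequality $Gpd_{A^e}(A)\leq 0$ means there is an exact sequence $0\to G^0 \to A \to 0$ with $G^0$ Gorenstein projective, so that $A\cong G^0$ is itself Gorenstein projective. Thus the proposition is equivalent to the assertion that $Gpd_{A^e}(A)=0$ if and only if $A$ is selfinjective.

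Next I would apply Theorem \ref{mainresult}. Since $A$ is left nearly Gorenstein, the theorem gives $Gpd_{A^e}(A)=Gdim(A)$. Consequently $Gpd_{A^e}(A)=0$ holds exactly when $Gdim(A)=0$, and the problem is reduced to identifying the vanishing of the Gorenstein dimension with selfinjectivity.

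Finally I would check that $Gdim(A)=0$ if and only if $A$ is selfinjective. An algebra has finite Gorenstein dimension exactly when it is Gorenstein, and in that case $Gdim(A)$ is the common value of the injective dimensions of ${}_AA$ and $A_A$. Hence $Gdim(A)=0$ says precisely that the regular module $A_A$ has injective dimension zero, that is, that $A$ is injective as a module over itself, which is the definition of a selfinjective algebra; conversely a selfinjective algebra is Gorenstein of dimension zero. Chaining the three steps yields the proposition.

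I do not anticipate a real obstacle here, since all the homological weight is borne by Theorem \ref{mainresult}. The only points demanding slight care are the formal identification of the property ``$A$ is Gorenstein projective over $A^e$'' with the equation $Gpd_{A^e}(A)=0$, and checking that the convention for $Gdim$ matches selfinjectivity at the value $0$; both follow at once from the definitions collected in Section 1.
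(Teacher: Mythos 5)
Your proposal is correct and follows essentially the same route as the paper: identify ``$A$ is Gorenstein projective over $A^e$'' with $Gpd_{A^e}(A)=0$, invoke Theorem \ref{mainresult} to replace this with $Gdim(A)=0$, and note that Gorenstein dimension zero is exactly selfinjectivity. The extra detail you supply (the exact sequence characterisation of $Gpd_{A^e}(A)\leq 0$ and the finiteness discussion for $Gdim$) only fleshes out steps the paper leaves implicit.
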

\begin{proof}
$A$ being Gorenstein projective as an $A^e$-module is equivalent that $A$ as an $A^e$-module has Gorenstein projective dimension zero. Thus by \ref{mainresult} $A$ is Gorenstein projective as an $A^e$-module iff $A$ has Gorenstein dimension zero, which is equivalent to $A$ being selfinjective.
\end{proof}

In \cite{Ta}, Tachikawa conjectured that for an algebra $A$ one has $Ext_A^i(D(A),A) =0$ for all $i \geq 1$ iff $A$ is selfinjective. This is now known as the first Tachikawa conjecture. The second Tachikawa conjectures states that $Ext_B^i(M,M) \neq 0$ for some $i \geq 1$ for any non-projective module $M$ over a selfinjective algebra $B$. The first and second Tachikawa conjecture are together equivalent to the famous Nakayama conjecture.
We refer to \cite{Yam} for more on those homological conjectures.
The first Tachikawa conjecture is trivial for Gorenstein algebras of Gorenstein dimension $g \geq 1$, since in this case $Ext^g(D(A),A) \neq 0$.
The following shows that a much stronger version of the first Tachikawa conjecture is true for left nearly Gorenstein algebras:
\begin{proposition}
Let $A$ be a left nearly Gorenstein algebra.
Then $Ext^i(D(A),A) \neq 0$ for infinitely many $i \geq 1$ in case $A$ is not Gorenstein.
\end{proposition}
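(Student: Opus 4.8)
The plan is to argue by contradiction and feed the situation into the injective-syzygy obstruction of Proposition \ref{injgorproprop}; this is essentially the reasoning of Case 2 in the proof of Theorem \ref{mainresult}, which I would isolate here as a standalone argument.

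First I would record the key translation: for a left nearly Gorenstein algebra, $A$ fails to be Gorenstein precisely when the injective module $D(A)$ has infinite projective dimension. Indeed, applying the exact duality $D = \Hom_K(-,K)$ to a minimal injective resolution of the left regular module ${}_A A$ turns it into a projective resolution of the right module $D({}_A A) = D(A)$, so that $pd_A(D(A))$ equals the injective dimension of ${}_A A$. Since left nearly Gorenstein algebras satisfy the Gorenstein symmetry conjecture, finiteness of this injective dimension forces the injective dimension of $A_A$ to be finite and equal to it, i.e.\ it is equivalent to $A$ being Gorenstein. Hence the hypothesis that $A$ is not Gorenstein gives that the injective module $D(A)$ has infinite projective dimension, which is exactly the hypothesis of Proposition \ref{injgorproprop}.

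Next, suppose for contradiction that $Ext^i(D(A),A) \neq 0$ for only finitely many $i \geq 1$. Then I can choose $r \geq 1$ with $Ext^{r+i}(D(A),A) = 0$ for all $i \geq 1$ (take $r$ larger than the finitely many nonzero indices, or $r=1$ if there are none). Dimension shifting along the syzygy sequences $0 \to \Omega^r(D(A)) \to P \to \Omega^{r-1}(D(A)) \to 0$ with $P$ projective yields $Ext^i(\Omega^r(D(A)),A) \cong Ext^{r+i}(D(A),A) = 0$ for all $i \geq 1$, so $\Omega^r(D(A))$ is a stable module. Because $A$ is left nearly Gorenstein, stable modules coincide with Gorenstein projective modules, so $\Omega^r(D(A))$ is Gorenstein projective with $r \geq 1$. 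This contradicts Proposition \ref{injgorproprop}, since $D(A)$ is injective of infinite projective dimension. Therefore $Ext^i(D(A),A) \neq 0$ for infinitely many $i \geq 1$.

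The only genuinely delicate point is the first step: making sure that being non-Gorenstein really forces $D(A)$ to have infinite projective dimension. This is where Gorenstein symmetry is essential, and it is available precisely because $A$ is left nearly Gorenstein; without it one only controls one of the two one-sided injective dimensions and the equivalence could fail. Everything after that is a routine dimension shift together with the left nearly Gorenstein identification of stable and Gorenstein projective modules, feeding directly into Proposition \ref{injgorproprop}.
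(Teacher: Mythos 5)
Your proof is correct and is essentially the paper's own argument: the paper deduces this proposition directly from Theorem \ref{mainresult}, whose Case 2 is precisely the contradiction argument you give (dimension shifting to make $\Omega^r(D(A))$ stable, hence Gorenstein projective by the left nearly Gorenstein hypothesis, contradicting Proposition \ref{injgorproprop}). Your explicit justification that non-Gorenstein plus Gorenstein symmetry forces $pd_A(D(A)) = \infty$ is a point the paper asserts without elaboration, so your write-up is, if anything, slightly more complete.
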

\begin{proof}
This is a direct consequence of \ref{mainresult} for non Gorenstein algebras.
\end{proof}

We end the article with some questions:
\begin{enumerate}
\item Does $Gdim(A)=Gpd_{A^e}(A)$ hold for any finite dimensional algebra $A$?
\item Can one give an explicit resolution by Gorenstein projective modules of $A$ as an $A^e$-module in special cases, such as for Gorenstein algebras?
\item If two algebras $A$ and $B$ are left nearly Gorenstein, is also their tensor product left nearly Gorenstein? What about the other direction: If $A \otimes_K B$ is left nearly Gorenstein, are $A$ and $B$ left nearly Gorenstein?
\item Given a commutative algebra that is not Gorenstein, do we have $Ext^i(D(A),A) \neq 0$ for infinitely many $i \geq 1$?
\end{enumerate}

\end{document}